\newcommand*\deffont[1]{\textit{#1}}
\def\makeautorefname#1#2{\expandafter\def\csname#1autorefname\endcsname{#2}}
\def\equationautorefname~#1\null{(#1)\null}
\newtheorem{thm}{Theorem}[section]
\newtheorem{prop}{Proposition}[section]
\newtheorem{lem}{Lemma}[section]
\theoremstyle{definition}
\newtheorem{defn}{Definition}[section]
\newtheorem{exmp}{Example}[section]
\newtheorem{rem}{Remark}[section]
\let\c@obs=\c@thm
\let\c@cor=\c@thm
\let\c@prop=\c@thm
\let\c@lem=\c@thm
\let\c@prob=\c@thm
\let\c@con=\c@thm
\let\c@conj=\c@thm
\let\c@defn=\c@thm
\let\c@notn=\c@thm
\let\c@notns=\c@thm
\let\c@exmp=\c@thm
\let\c@ax=\c@thm
\let\c@pro=\c@thm
\let\c@ass=\c@thm
\let\c@warn=\c@thm
\let\c@rem=\c@thm
\let\c@conv=\c@thm
\let\c@sch=\c@thm
\def\*#1{\mathbf{#1}}
\newcommand{\R}{\mathbb R}
\newcommand{\Z}{\mathbb Z}
\newcommand{\C}{\mathbb C}
\newcommand\mc{\mathcal}
\newcommand\bb{\mathbb}
\newcommand\td{\tilde}
\newcommand\mf{\mathfrak}
\newcommand\del{\partial}
\newcommand\im{\operatorname{im}}
\newcommand\ot{\otimes}
\newcommand\id{\operatorname{id}}
\newcommand{\ld}{\operatorname{\Lambda}}
\title{On Nonprojectedness of Supermoduli with Neveu-Schwarz and Ramond Punctures}
\author{Tianyi Wang}
\address{Department of Mathematics, University of Pennsylvania, Philadelphia, PA 19104, USA}
\email{tywang9@sas.upenn.edu}
\begin{document}

\begin{abstract}
    We study the supermoduli space $\mf {M}_{g,n,2r}$ of Super Riemann Surfaces (SRS) of genus $g$, with $n$ Neveu-Schwarz punctures and $2r$ Ramond punctures. We improve the result of Donagi, Witten, and Ott \cite{DW13, DW14, DO23} by showing that the supermoduli space $\mf M_{g,n,2r}$ is not projected if $g\geq n+5r+3$.
\end{abstract}

\maketitle

\section{Introduction}
Supermanifolds are the natural backgrounds for supersymmetric field theories, and are extensively studied mathematical objects \cite{manin}. In perturbative superstring theory, particles are replaced by strings; hence, if one is interested in calculating scattering amplitudes to certain perturbative orders, then the natural generalization in superstring theory is to consider amplitude contributions coming from Super Riemann surfaces (SRS) with genus up to that order \cite{W-moresuperstringpert}. An example is illustrated by Figure \ref{feynman diagram becomes riemann surfaces}. Moreover, if one wants to talk about insertions of spacetime bosons and fermions, one needs to consider Neveu–Schwarz (NS) and Ramond (R) punctures, respectively, on an SRS, which label two different kinds of vertex-operator insertions. 

\begin{figure}[H]
    \centering

    \tikzset{every picture/.style={line width=0.75pt}} 
    
    \begin{tikzpicture}[x=0.75pt,y=0.75pt,yscale=-1,xscale=1]
    
    \draw    (181.02,93.25) -- (204.96,129.89) ;
    \draw    (181.51,166.52) -- (204.96,129.89) ;
    \draw    (279.24,129.89) -- (302.21,93.98) ;
    \draw    (279.24,129.89) -- (302.7,165.79) ;
    \draw    (204.96,129.89) -- (229.89,129.89) ;
    \draw    (254.32,129.89) -- (279.24,129.89) ;
    \draw   (229.89,130.84) .. controls (229.89,123.48) and (235.36,117.51) .. (242.1,117.51) .. controls (248.85,117.51) and (254.32,123.48) .. (254.32,130.84) .. controls (254.32,138.2) and (248.85,144.17) .. (242.1,144.17) .. controls (235.36,144.17) and (229.89,138.2) .. (229.89,130.84) -- cycle ;
    \draw   (362.05,94.91) .. controls (365.21,94.91) and (367.76,101.21) .. (367.75,108.97) .. controls (367.75,116.74) and (365.18,123.04) .. (362.03,123.03) .. controls (358.87,123.03) and (356.31,116.73) .. (356.32,108.97) .. controls (356.33,101.2) and (358.89,94.91) .. (362.05,94.91) -- cycle ;
    \draw   (363.39,142.21) .. controls (366.54,142.21) and (369.1,148.51) .. (369.09,156.27) .. controls (369.08,164.04) and (366.52,170.33) .. (363.36,170.33) .. controls (360.2,170.33) and (357.65,164.03) .. (357.66,156.27) .. controls (357.66,148.5) and (360.23,142.21) .. (363.39,142.21) -- cycle ;
    \draw   (476.96,94.38) .. controls (480.12,94.38) and (482.67,100.68) .. (482.67,108.44) .. controls (482.66,116.21) and (480.1,122.5) .. (476.94,122.5) .. controls (473.78,122.5) and (471.23,116.2) .. (471.23,108.44) .. controls (471.24,100.67) and (473.81,94.38) .. (476.96,94.38) -- cycle ;
    \draw   (476.96,142.21) .. controls (480.12,142.21) and (482.67,148.51) .. (482.67,156.27) .. controls (482.66,164.04) and (480.1,170.33) .. (476.94,170.33) .. controls (473.78,170.33) and (471.23,164.03) .. (471.23,156.27) .. controls (471.24,148.5) and (473.81,142.21) .. (476.96,142.21) -- cycle ;
    \draw    (362.05,94.91) .. controls (390.34,84.44) and (455.82,88.16) .. (476.96,94.38) ;
    \draw    (363.36,170.33) .. controls (395.02,183.3) and (459.83,179.58) .. (478.27,169.8) ;
    \draw    (362.03,123.03) .. controls (389.68,129.62) and (387,134.4) .. (363.39,142.21) ;
    \draw    (476.94,122.5) .. controls (447.13,130.68) and (449.14,137.59) .. (476.96,142.21) ;
    \draw  [draw opacity=0] (411.98,131.48) .. controls (414.44,129.84) and (417.47,128.85) .. (420.76,128.83) .. controls (424.21,128.8) and (427.39,129.84) .. (429.93,131.6) -- (420.84,142.41) -- cycle ; \draw   (411.98,131.48) .. controls (414.44,129.84) and (417.47,128.85) .. (420.76,128.83) .. controls (424.21,128.8) and (427.39,129.84) .. (429.93,131.6) ;  
    \draw  [draw opacity=0] (434.46,128.6) .. controls (432.03,133.37) and (426.55,136.63) .. (420.24,136.51) .. controls (414.07,136.39) and (408.8,133.06) .. (406.44,128.36) -- (420.46,122.93) -- cycle ; \draw   (434.46,128.6) .. controls (432.03,133.37) and (426.55,136.63) .. (420.24,136.51) .. controls (414.07,136.39) and (408.8,133.06) .. (406.44,128.36) ;  
    \end{tikzpicture}
    
    \caption{With particles replaced by strings, a 1-loop Feynman diagram (left) becomes a Super Riemann surface of genus 1 (right) in superstring perturbation theory.}
    \label{feynman diagram becomes riemann surfaces}
\end{figure}
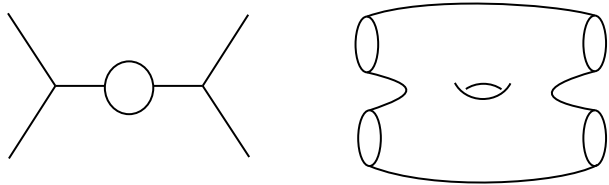

The formulation of superstring theory using the path integral leads naturally to moduli spaces $\mf M_{g,n,2r}$ of SRS with punctures. The role played by Feynman diagrams in ordinary QFT is taken, instead, by (families of) SRS with punctures: one integrates a suitable superstring measure over the supermoduli space of those surfaces. By far, one of the most successful computations was done in 2-loop order \cite{HP1,HP2}, which used the property that $\mf M_g$ is split for $g=2$. However, a major mathematical complication is that for sufficiently large genus $g$, the supermoduli space $\mf M_g$ is not projected, hence does not split. This result was proved by Donagi and Witten in \cite{DW13,DW14}. In fact, they proved an even stronger result when the NS punctures are also present: They showed that $\mf M_{g,n,0}$ is not projected for $g\geq 2$ and $g-1\geq n\geq 1$. Then following this work, Donagi and Ott \cite{DO23} proved that $\mf M_{g,0,2r}$ is not projected for $g\geq 5r+1\geq 6$, i.e., when the R punctures are present, but no NS puncture is present.

The purpose of this paper is to improve the result of Donagi, Witten, and Ott by proving nonprojectedness of supermoduli $\mf M_{g,n,2r}$ for large genus $g$. The main result of this paper is that $\mf M_{g,n,2r}$ is not projected if $g\geq n+5r+3$, which is given in Theorem \ref{main result}.

This paper is structured as follows: In section \ref{section: preliminaries}, we collect several key results in the literature that we will use repeatedly in our proof. In section \ref{section: settings}, we introduce the setup of our proof, and check that several ingredients required in the proof are in place. Our main results, which include a proof of Theorem \ref{main result}, will be established in Section \ref{section: proof of main result}.

\section{Preliminaries}\label{section: preliminaries}
The theory of supermanifolds and SRS has been studied extensively in literature such as \cite{DW13, manin,W-SRSM}. Therefore, in this section, we will only briefly recall relevant definitions and refer the readers to the mentioned literature for details.

\begin{defn}[split supermanifold]
    Let $M$ be a manifold and $V$ a vector bundle over $M$. Then we define the \deffont{split supermanifold} $S:=S(M,V)$ to be the locally ringed space $(M,\mc O_S)$, where $\mc O_S:=\mc O_M\otimes \ld^\bullet V^*$ is the sheaf of $\mc O_M$-valued sections of the exterior algebra $\ld ^\bullet V^*$. In other words,
    \[
    S(M,V)=(M,\mc O_M\ot \ld^\bullet V^*).
    \]
    If we interpret $V$ as a locally free sheaf of $\mc O_M$-modules, then $\mc O_S$ is simply $\ld^\bullet V^*$.    
    Then $\mc O_S$ is $\Z_2$-graded, supercommutative, and its stalks are local rings.
\end{defn}

\begin{exmp}
     $S(\R^m, \mc O_{\R^m}^{\oplus n})$ is just the smooth manifold $\R^m$, with a sheaf of supercommutative ring 
    $$\mc O: U\mapsto C^\infty(U)[\theta^1,...,\theta^n],$$ 
    that sends $U$ to the Grassmann algebra with generators $\theta^1,...,\theta^n$ over $C^\infty(U)=\mc O_{\R^m}(U)$. This is the structure sheaf. An element of the structure sheaf looks like $f=\sum_I f_I\theta^I$ where $I=(i_1,...,i_k)\subset \qty{1,...,n}$ is an index set and $\theta^I=\theta^{i_1}\theta^{i_2}\cdots \theta^{i_k}$. We will call this model space $\R^{m|n}$. Thus $\R^{m|n}$ has even coordinates $x^1,...,x^m$ and odd coordinates $\theta^1,...,\theta^n$. For brevity, we will usually write an element $f$ in the structure sheaf as $f(x^\mu,\theta^\alpha)$, and we will denote the structure sheaf $\mc O$ by $\mc O_{\R^{m|n}}$ if we want to be specific. Similarly, one can define $\C^{m|n}$. 
\end{exmp}

\begin{defn}
    A \deffont{supermanifold} of dimension $m|n$ is a locally ringed space $(S,\mc O_S)$ that is locally isomorphic to some model space $S(M,V)$, where $\dim M=m$ and $\operatorname{rank}V=n$. We will usually just refer to this supermanifold as $S$ with the understanding that it also comes with the structure sheaf $\mc O_S$. The manifold $M$ is called the \deffont{reduced space} or the \deffont{bosonic reduction} of $S$, and we usually write $|S|=M.$ A \deffont{morphism} between supermanifolds is simply a morphism of locally ringed spaces.
\end{defn}

\begin{defn}
    A \deffont{sheaf} $\mc F$ on a supermanifold $S$ is simply a sheaf on the reduced space $M$, and sheaf cohomology $H^k(S,\mc F)$ of $\mc F$ on $S$ is simply the sheaf cohomology $H^k(M,\mc F)$ on the reduced space.
\end{defn}

\begin{defn}[tangent bundle]
    The \deffont{tangent bundle} $T_S$ of supermanifold $S$ is the sheaf of $\mc O_S$-modules given by derivations of the structure sheaf, i.e., as sheaves $$T_S:=\operatorname{Der}(\mc O_S).$$ 
    It is a $\Z_2$-graded vector bundle, or a locally free sheaf of $\mc O_S$-modules.
\end{defn}

In the simplest example where $S=\R^{m|n}$ or $\C^{m|n}$, $T_S$ is the free $\mc O_S$-module generated by even tangent vectors $\pdv{x^\mu}$ for $\mu=1,...,m$ and odd tangent vectors $\pdv{\theta^\alpha}$ for $\alpha=1,...,n$.

In general, for a split supermanifold $S=S(M,V)$, the tangent bundle $T_S$ need not have distinguished complementary even and odd subbundles: The even and odd parts are not sheaves of $\mc O_S$ modules. For example, $\pdv{\theta^1}$ is an odd vector field, but multiplication by an element of $\mc O_S$ might change parity: Consider $\theta^1\pdv{\theta^1}$. This is now an even vector field. However, we note that since elements in $\mc O_M$ are even, if we restrict ourselves to multiplying only elements in $\mc O_M$ then parity is preserved. More specifically, what we are saying is that the restriction $T_S|_M$ of $T_S$ to the reduced space $M$ does split (By restriction to $M$ we mean pullback, under the natural inclusion $i:M\to S$, of a sheaf of $\mc O_S$-modules to a sheaf of $\mc O_M$-modules. In this case, this is accomplished by setting all odd functions to 0). Explicitly, the splitting is given by
\begin{align*}
    T_{S,+}:&=T_M,\\
    T_{S,-}:&=V.
\end{align*}
For a general supermanifold $S$, we only have local isomorphisms between $S$ and $S(M,V)$ for some model space $M$ and vector bundle $V$, but there is no canonical identification $T_{S,-}= V$. However, there is still a way to define $T_{S,-}$. We consider the natural inclusion $i:M\to S$, where we view $M$ as the subspace that is locally given by $\theta^\alpha=0$ for all $\alpha$. Then then we have a normal bundle sequence
\[
0\to T_M\to T_S|_M\to N_{M/S}\to 0.
\]
Now $N_{M/S}$ is a purely odd bundle. Therefore, we may define $T_{S,+}=T_M$ and $T_{S,-}=N_{M/S}$. We note that the rank of $T_{S,+}=T_S$ is $m|0$ and the rank of $T_{S,-}$ is $0|n$ if $\dim S=m|n$.

Next, we come to the definition of split and projected supermanifolds.

\begin{defn}[split and projected supermanifold]
    A supermanifold $S$ is said to be \deffont{split} if it is globally isomorphic to $S(M,V)$ for some manifold $M$ and some vector bundle $V\to M$. In other words, it is globally isomorphic to some split supermanifold.

    A supermanifold $S$ is said to be \deffont{projected} if there exists a projection map $\pi:S\to M$, such that the inclusion $i:M\to S$, which is identity on the reduced space $i: |M|\to |S|$, and $i^*:\mc O_S\to \mc O_M$ given by imposing $\theta^1=\cdots=\theta^n=0$, is a section of $\pi$, i.e., $\pi i=\id_M$ and $i^*\pi^*=\id_{\mc O_M}$.
\end{defn}

\begin{lem}
    Any split supermanifold $S$ is projected.
\end{lem}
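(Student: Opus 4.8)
The plan is to reduce immediately to the model case and then exhibit the projection explicitly using the grading on the exterior algebra. Since $S$ is split, by definition there is a global isomorphism $S \cong S(M,V)$, so I may assume $S = S(M,V)$ with structure sheaf $\mathcal{O}_S = \mathcal{O}_M \otimes \Lambda^\bullet V^* = \bigoplus_{k\geq 0}\Lambda^k V^*$ and reduced space $|S| = M$. The whole point is that the split structure sheaf carries an honest $\Z$-grading by exterior degree, whose degree-zero summand is exactly $\mathcal{O}_M$; this grading furnishes both a canonical inclusion of and a canonical projection onto $\mathcal{O}_M$.

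First I would define the candidate projection $\pi : S \to M$. On reduced spaces it is the identity $|S| = M \to M$, and on structure sheaves I take $\pi^* : \mathcal{O}_M \to \mathcal{O}_S$ to be the natural inclusion of the degree-zero summand $\mathcal{O}_M = \Lambda^0 V^* \hookrightarrow \bigoplus_k \Lambda^k V^*$. I then check that this is a morphism of locally ringed spaces: the image lands in the $\Z_2$-even part of $\mathcal{O}_S$, the map is a homomorphism of sheaves of supercommutative rings because $\Lambda^0 V^*$ is a unital subring of $\mathcal{O}_S$, and it induces local homomorphisms on stalks (the preimage of the maximal ideal of $\mathcal{O}_{S,x}$ is the maximal ideal of $\mathcal{O}_{M,x}$). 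Hence $\pi$ is a genuine morphism of supermanifolds.

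Next I would recall the inclusion $i : M \to S$ from the definition of projectedness: on reduced spaces it is again the identity, and $i^* : \mathcal{O}_S \to \mathcal{O}_M$ is the quotient that sets $\theta^1 = \cdots = \theta^n = 0$, i.e., the projection of $\bigoplus_k \Lambda^k V^*$ onto its degree-zero summand $\Lambda^0 V^* = \mathcal{O}_M$, killing all terms of positive exterior degree. Composing, $\pi i = \id_M$ holds on reduced spaces trivially, and on structure sheaves $i^*\pi^* = \id_{\mathcal{O}_M}$, because including $\mathcal{O}_M$ into degree zero and then projecting back onto degree zero returns the original section. Therefore $i$ is a section of $\pi$, and $S = S(M,V)$ is projected.

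There is no serious obstacle here: the content of the lemma is precisely that the exterior-degree grading of $\Lambda^\bullet V^*$ splits off $\mathcal{O}_M$ both as a sub and as a quotient. The only points needing care are the routine verification that $\pi^*$ respects the super-ring structure (it does, as the inclusion of the unital subring of functions of exterior degree zero) and the observation that projectedness is invariant under isomorphism of supermanifolds, which transports the projection constructed on $S(M,V)$ back to the original $S$.
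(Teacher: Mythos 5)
Your proposal is correct and takes essentially the same route as the paper: both define $\pi^*$ as the inclusion of $\mathcal{O}_M$ as the exterior-degree-zero summand of $\mathcal{O}_M\otimes\Lambda^\bullet V^*$ and observe that $i^*\pi^*=\id_{\mathcal{O}_M}$. You supply a bit more routine verification (that $\pi^*$ is a local homomorphism of super-rings and that projectedness transports along the isomorphism $S\cong S(M,V)$), but the idea is identical.
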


\begin{proof}
    Indeed, if $S$ is split, then $\mc O_S\cong \mc O_M\otimes \ld^\bullet V^*$. Then we have a morphism $\pi^*:\mc O_M\to \mc O_S$ via embedding $\mc O_M$ to be the degree zero part. And it is easy to see that $i^*\pi^*=\id_{\mc O_M}$. Now to construct $\pi:S\to M$ with the desired property, we choose the map of underlying space $\pi:|S|\to |M|$ to be identity, and $\pi^*: \mc O_M\to \pi_*\mc O_S$ to be the map given above.
\end{proof}

A natural question to ask is whether we can characterize the obstruction to splitting or projection of a supermanifold via some cohomology class. The answer is affirmative. The obstruction theory of supermanifolds is developed in \cite{manin, gr, ber}, and is summarized in \cite{DW13} with great detail. The result relevant to us is that a necessary condition for a supermanifold $S=(M,\mc O_S)$ to split and to be projected is that a certain cohomology class called the \deffont{second obstruction class}
\[
\omega_2\in H^1(M,T_+\otimes \ld^2 V^*)
\]
vanishes. The same construction is also discussed in \cite{manin}, with a more geometric perspective. The second obstruction class will be useful in concluding that certain supermanifolds are not projected. Indeed, many basic examples of non-projected supermanifolds in \cite{DW13} were established by showing $\omega_2\neq 0$ for those supermanifolds.

However, for complicated supermanifolds, like supermoduli spaces, it is very difficult to directly evaluate $\omega_2$ and show that it is nonzero. Fortunately, there are also some indirect results we can use. The next theorem, which is Corollary 2.8 of \cite{DW13}, says that a finite covering of a non-projected supermanifold is also non-projected.

\begin{thm}\label{cover of nonproject is nonproject}
    Let $\pi:Y\to X$ be a finite covering map of supermanifolds. If $\omega_2(X)\neq 0$, then $\omega_2(Y)\neq 0$, so $Y$ is not projected.
\end{thm}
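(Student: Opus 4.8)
The plan is to exploit the fact that a finite covering map is in particular a local isomorphism, so that the obstruction class of $Y$ is literally the pullback along $\pi$ of the obstruction class of $X$, and then to invoke a transfer (trace) argument showing that this pullback is injective on cohomology. Throughout, write $|\pi|:|Y|\to|X|$ for the induced finite covering of reduced spaces, let $d$ be its degree, and let $\mc F:=T_+\ot \ld^2 V^*$ be the coefficient sheaf in which $\omega_2$ lives.

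First I would recall the \v{C}ech-theoretic description of $\omega_2$. Choosing an open cover $\{U_i\}$ of $|X|$ together with local splittings of $\mc O_X$ modulo the third power of its nilpotent ideal, the class $\omega_2(X)\in H^1(|X|,\mc F)$ is represented by the $\ld^2V^*$-part of the transition automorphisms comparing these local splittings on overlaps $U_i\cap U_j$. Because $|\pi|$ is a covering, every point of $|Y|$ has a neighborhood mapped isomorphically onto an open set of $|X|$; pulling back the cover $\{U_i\}$ and the chosen local splittings along $\pi$ produces local splittings of $\mc O_Y$ whose transition automorphisms are exactly the $\pi$-pullbacks of those on $X$. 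Since $\pi$ is \'etale we also have $T_{Y,+}=|\pi|^*T_{X,+}$ and $V_Y=|\pi|^*V_X$, hence $\mc F_Y\cong |\pi|^*\mc F$. Consequently the cocycle representing $\omega_2(Y)$ is the pullback of the one representing $\omega_2(X)$, giving
\[
\omega_2(Y)=\pi^*\omega_2(X)\in H^1\!\big(|Y|,\,|\pi|^*\mc F\big).
\]

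Next I would establish injectivity of $\pi^*$ by a transfer argument. Working over $\C$, finiteness of $|\pi|$ makes $|\pi|_*$ exact, so $H^*(|Y|,|\pi|^*\mc F)\cong H^*(|X|,\mc F\ot |\pi|_*\mc O_{|Y|})$, and the field-theoretic trace $|\pi|_*\mc O_{|Y|}\to\mc O_{|X|}$ composed with the unit $\mc O_{|X|}\to|\pi|_*\mc O_{|Y|}$ equals multiplication by $d$. Tensoring with $\mc F$ and passing to cohomology yields a transfer map $\operatorname{tr}:H^1(|Y|,|\pi|^*\mc F)\to H^1(|X|,\mc F)$ with $\operatorname{tr}\circ\pi^*=d\cdot\id$. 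As $d$ is a nonzero scalar on a $\C$-vector space, $\pi^*$ is injective, so $\omega_2(X)\neq0$ forces $\omega_2(Y)=\pi^*\omega_2(X)\neq0$. Finally, since nonvanishing of the second obstruction class obstructs projectedness (as recalled before the statement), $\omega_2(Y)\neq0$ gives that $Y$ is not projected.

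The step I expect to be the main obstacle is the second one: carefully verifying that the obstruction \emph{cochain} of $Y$ is genuinely the pullback of that of $X$, rather than merely cohomologous to it after unwinding definitions. This requires that the local splitting data and transition automorphisms of $\mc O_Y$ be chosen compatibly with $\pi$ (which the local-isomorphism property supplies) and that the canonical identification $\mc F_Y\cong|\pi|^*\mc F$ intertwine the two constructions of $\omega_2$ at the level of \v{C}ech $1$-cochains. Granting that naturality, the transfer step is standard, its only real input being that the coefficient sheaf is pulled back from the base, which is again exactly what the \'etale property guarantees.
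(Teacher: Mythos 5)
The paper does not prove this statement itself: it is quoted as Corollary~2.8 of \cite{DW13}, with the proof deferred to that reference. Your argument is correct and is essentially the one given there — naturality $\omega_2(Y)=\pi^*\omega_2(X)$ for an unbranched covering (via pulled-back covers and local splittings, cf.\ Lemma~2.7 of \cite{DW13}), followed by injectivity of $\pi^*$ from the transfer map satisfying $\operatorname{tr}\circ\pi^*=d\cdot\operatorname{id}$, which is invertible over $\C$.
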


If we have a submanifold of a supermanifold, which we know is not projected, then it is natural to suspect that the big supermanifold itself is not projected. This is not true in general: For example, a supermanifold of dimension $n|2$ is split if and only if it is projected by Corollary 2.3 of \cite{DW13}. In particular, $\bb{CP}^{n|2}$ is split, but a generic hypersurface of $\bb{CP}^{n|2}$ is nonsplit\footnote{The author thanks Edward Witten for pointing this out.}. However, in certain special cases the statement is true, which is made precise by a modified version of Corollary 2.12 of \cite{DW13}.

\begin{thm}\label{immersion+NBS splits gives non-project}
    Let $a:S'\to S$ be an immersion of supermanifolds, with reduced spaces $M'\subset M$, such that the normal sequence of $M'$ decomposes: $a^*T_M\cong T_M'\oplus N$ , where $N$ is the even component of the normal bundle to the map $a$. If $\omega_2(S')\neq 0$, then we also have $\omega_2(S)\neq 0$, so $S$ is not projected.
\end{thm}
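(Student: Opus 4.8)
The plan is to prove the statement by establishing that the second obstruction class is \emph{natural} under the immersion $a$: concretely, I would produce a homomorphism $\phi\colon H^1(M,T_M\ot\ld^2 V^*)\to H^1(M',T_{M'}\ot\ld^2 V'^*)$, where $V=T_{S,-}$ and $V'=T_{S',-}$ denote the odd tangent bundles, that carries $\omega_2(S)$ to $\omega_2(S')$. Granting this, the theorem follows by contraposition: if $\omega_2(S')\ne 0$, then the relation $\omega_2(S')=\phi(\omega_2(S))$ forces $\omega_2(S)\ne 0$, and since (as recalled above, following \cite{DW13}) a projected supermanifold has vanishing second obstruction class, $S$ cannot be projected.

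To construct $\phi$ I would first assemble the map on coefficient sheaves. Because $a$ is an immersion, its differential injects $T_{S'}\hookrightarrow a^*T_S$; taking odd parts gives $V'\hookrightarrow a^*V$, and dualizing yields a surjection $a^*V^*\twoheadrightarrow V'^*$, hence a surjection $\ld^2 a^*V^*\twoheadrightarrow\ld^2 V'^*$ obtained by ``discarding the odd normal directions''. On the even side, the hypothesis $a^*T_M\cong T_{M'}\oplus N$ supplies a projection $p\colon a^*T_M\to T_{M'}$ splitting off the even normal bundle $N$. Tensoring $p$ with the exterior restriction gives a map of sheaves on $M'$, namely $a^*(T_M\ot\ld^2 V^*)\to T_{M'}\ot\ld^2 V'^*$, and precomposing with the \v{C}ech pullback $a^*\colon H^1(M,T_M\ot\ld^2 V^*)\to H^1(M',a^*(T_M\ot\ld^2 V^*))$ produces the desired $\phi$.

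The heart of the argument --- and the step I expect to be the main obstacle --- is checking that $\phi(\omega_2(S))=\omega_2(S')$ at the level of \v{C}ech cocycles. Here I would choose an atlas of $S$ adapted to the immersion, so that near each point of $M'$ the even coordinates split as $(x,y)$ with $y$ transverse to $M'$ and the odd coordinates split as $(\theta,\eta)$ with $\eta$ spanning the odd normal directions, $S'$ being cut out locally by $y=\eta=0$. Recall that $\omega_2(S)$ is represented by the coefficient of the quadratic-in-$\theta$ term of the even transition functions, viewed as a $1$-cocycle with values in $T_M\ot\ld^2 V^*$. Restricting this cocycle along $a$ and setting $y=\eta=0$ retains exactly the $\theta\theta$-coefficients of the $x$-transition functions, while the projection $p$ annihilates the contributions valued in the transverse directions $N$; what survives is precisely the cocycle representing $\omega_2(S')$. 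It is exactly the splitting hypothesis $a^*T_M\cong T_{M'}\oplus N$ that makes $p$ well defined globally and guarantees that the transverse even transition data is discarded coherently rather than contaminating the tangential part. Verifying that the adapted atlas can be chosen in a globally compatible way, and that the cocycle identity persists after the requisite refinement of covers, is the technical crux; once it is in place, the contrapositive argument of the first paragraph completes the proof.
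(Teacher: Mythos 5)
Your proposal is essentially the argument behind this statement: the paper itself gives no proof, quoting it as (a modified form of) Corollary 2.12 of \cite{DW13}, and the proof there is exactly your naturality argument --- the surjection $\ld^2 a^*V^*\to\ld^2 V'^*$ from the immersion, the projection $a^*T_M\to T_{M'}$ supplied by the splitting hypothesis, and the identification of the two obstruction cocycles in adapted coordinates (Donagi--Witten phrase it by showing both classes have the same image in the intermediate group $H^1(M',a^*T_M\ot\ld^2 V'^*)$ and using the splitting to make the map from the $S'$ side injective, which is equivalent to your $\phi(\omega_2(S))=\omega_2(S')$ after composing with the projection). So the proposal is correct and follows the same route as the cited source.
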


The remaining part of this section will be devoted to the central objects of study in this paper, Super Riemann Surfaces and their moduli. Again, much of the theory below is established in \cite{W-SRSM} and \cite{DW13}. We will only collect relevant results here and refer the readers to the literature mentioned for detailed explanations.

\begin{defn}
    Let $S$ be a supermanifold. A distribution (i.e., a subsheaf of the tangent sheaf) $\mc D\subset T_S$ is called a \deffont{superconformal structure} if it is
    \begin{itemize}
        \item an odd distribution, i.e., a subbundle of rank $0|1$.
        \item \deffont{everywhere non-integrable}. By Frobenius' theorem, a distribution is integrable if it is closed under the Lie bracket. Since $\mc D$ is an odd distribution, its Lie bracket will be denoted by the anticommutator. By assumption $\mc D$ is  of rank 0|1, so it will be generated by a single odd vector field $v$. Since $v^2:=\frac{1}{2}\{v,v\}$, we define $\mc D$ to be \deffont{integrable} if $v^2\in \mc D$, and define it to be \deffont{everywhere non-integrable} if $v^2$ is everywhere independent of $v$ over $\mc O_S$.
    \end{itemize}
\end{defn}

\begin{defn}
    A \deffont{Super Riemann Surface} (SRS) is a pair $(S,\mc D)$, where $S=(C,\mc O_S)$
    is a complex compact supermanifold of dimension $1|1$, and $\mc D$ is a superconformal structure.
\end{defn}

\begin{rem}
    Since $S$ is of dimension $1|1$, $T_S$ has rank $1|1$. Since $\mc D$ has rank $0|1$ and $\mc D^2$ is even, hence has rank $1|0$, together they generate the entire tangent bundle $T_S$. Moreover, we have an isomorphism $T_S/\mc D\cong \mc D^2$. Therefore, we actually have an exact sequence of sheaves
    \[
    0\to \mc D\to T_S\to \mc D^2\to 0.
    \]
\end{rem}

The next result is from \cite{DW13}, which gives direct characterizations of the generating vector field of the superconformal structure.
\begin{lem}\label{v in superconformal coord}\label{v^2=d/dx}
    Locally on a SRS one can choose coordinates $z$ and $\theta$, called a \deffont{superconformal coordinate}, such that $\mc D$ is generated by the vector field
    \[
    v:=\pdv{\theta}+\theta \pdv{z}.
    \]
\end{lem}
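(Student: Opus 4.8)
\emph{Plan.} The plan is to straighten $\mc D$ by an explicit local change of coordinates, reading off the one invertibility we need directly from the non-integrability hypothesis. First I would fix arbitrary local coordinates $(w,\eta)$ on the $1|1$ complex supermanifold $S$, so that the structure sheaf is locally $\mc O_{\mathrm{hol}}(w)[\eta]$ with $\eta^2=0$; in particular every even function has the form $g(w)$ and every odd function the form $f(w)\,\eta$. Since $\mc D$ has rank $0|1$ it is generated by a single odd vector field, which in these coordinates is forced to be
\[
v=g(w)\,\pdv{\eta}+f(w)\,\eta\,\pdv{w},
\]
the coefficient of the odd field $\pdv{\eta}$ being even and that of the even field $\pdv{w}$ being odd. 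A short super-Leibniz computation (using $\eta^2=0$) gives, for $\phi=\phi_0(w)+\phi_1(w)\eta$, that $v\phi=g\phi_1+f\phi_0'\,\eta$, and squaring,
\[
v^2=\tfrac12\{v,v\}=fg\,\pdv{w}+fg'\,\eta\,\pdv{\eta}.
\]

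Next I would translate everywhere-non-integrability into an invertibility statement. Reducing modulo the ideal $(\eta)$, i.e.\ restricting to the reduced curve, one has $v\equiv g\,\pdv{\eta}$ and $v^2\equiv fg\,\pdv{w}$; these span the $1|1$-dimensional tangent space at a point exactly when $g$ and $fg$ are nonzero there. By definition of a superconformal structure $v$ and $v^2$ must form a frame of $T_S$, and for a map of locally free sheaves of the same rank this is detected on the reduction by Nakayama; hence the hypothesis is \emph{equivalent} to $f$ and $g$ being nowhere vanishing.

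Finally I would exhibit the coordinate change. Because $g/f$ is a nonvanishing holomorphic function, $z:=z_0(w)$ with $z_0'=g/f$ (that is $z_0=\int (g/f)\,dw$, defined locally) is a legitimate even coordinate, and I would keep $\theta:=\eta$. The Jacobian of $(w,\eta)\mapsto(z,\theta)$ is invertible since its even and odd diagonal entries, $z_0'=g/f$ and $1$, are units, so $(z,\theta)$ is a genuine local coordinate system. Under $\pdv{\eta}=\pdv{\theta}$ and $\pdv{w}=(g/f)\,\pdv{z}$ one computes $v=g\bigl(\pdv{\theta}+\theta\,\pdv{z}\bigr)$, and since $g$ is an invertible even factor the distribution $\mc D=\langle v\rangle$ is generated by $\pdv{\theta}+\theta\,\pdv{z}$, as claimed.

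The computations here are entirely routine; the one point that carries the whole argument is the middle step, where the abstract non-integrability condition is shown to be precisely the nonvanishing of $f$ and $g$ --- it is this that makes $g/f$ invertible and hence the straightening possible. The remaining care is bookkeeping: the super-Leibniz signs in computing $v^2$, the relation $\eta^2=0$, and checking that the coordinate change is admissible. I would also record the invariant repackaging, which avoids coordinates and adapts to the family setting (where even functions may carry nilpotents): choose any odd $\theta$ with $v\theta$ a unit, rescale to $\hat v:=(v\theta)^{-1}v$ so that $\hat v\theta=1$, and then solve $\hat v z=\theta$ for an even $z$ whose body is a local coordinate; in the resulting frame $\hat v=\pdv{\theta}+\theta\,\pdv{z}$ automatically, since $\hat v=(\hat v z)\,\pdv{z}+(\hat v\theta)\,\pdv{\theta}$.
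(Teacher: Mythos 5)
Your proof is correct. The paper does not actually prove this lemma---it simply quotes it from Donagi--Witten---and your argument is essentially the standard one found in that reference and in Witten's notes: write the general odd generator $v=g(w)\,\partial_\eta+f(w)\,\eta\,\partial_w$ in arbitrary local coordinates, observe that everywhere-non-integrability (i.e.\ $v,v^2$ framing $T_S$, checked on the reduction via Nakayama) is exactly the nonvanishing of $f$ and $g$, and then straighten by taking $z$ a primitive of $g/f$; your closing coordinate-free reformulation is also the form of the argument that survives in families.
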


Now we can do deformation theory on SRS. We must therefore consider automorphisms on the SRS. Locally, the infinitesimal automorphisms are generated by \deffont{superconformal vector fields}, which preserve the distribution $\mc D$. In superconformal coordinates, a short calculation shows that the even superconformal vector field takes the form
\begin{align}\label{even supconf vec}
    \chi^+=f(z)\pdv{z}+\frac{f'(z)}{2}\theta \pdv{\theta}
\end{align}
while the odd one takes the form
\begin{align}\label{odd superconf vec}
    \chi^-=-g(z)\qty(\pdv{\theta}-\theta \pdv{z}),
\end{align}
where $f,g$ are holomorphic (even) functions on $S$ that depends on $z$ only, not $\theta$.
We denote the sheaf of superconformal vector fields on $S$ by $\mc A_S$, which is also the sheaf of infinitesimal automorphisms of $S$.
Now we may consider the moduli space $\mf M_g$ of SRS of genus $g$. Given a point $S\in \mf M_g$, we want to know what the tangent space $T_S\mf M_g$ is. A standard argument in deformation theory shows
\[
T_S\mf M_g=H^1(S,\mc A_S).
\]
The notion of a \deffont{puncture} on an ordinary Riemann surface has two analogs on an SRS. In string theory, they are known as the \deffont{Neveu-Schwarz puncture} (NS) and \deffont{Ramond puncture} (R). An NS puncture on an SRS $S$ is the obvious analog of a puncture on an ordinary Riemann surface: it is simply the choice of a point in $S$. 

If $S$ is a SRS with $n$ marked points $p_1,...,p_n$, or in divisor form $P=p_1+\cdots+p_n$, then it is an element of $\mf M_{g,n,0}$. The infinitesimal automorphisms on $S$ must preserve the superconformal structure $\mc D$, and preserve the marked points, which imposes an extra condition on superconformal vector fields of the form \eqref{even supconf vec} and \eqref{odd superconf vec} that they must vanish on the marked points, i.e., $f(0)=g(0)=0$ in local coordinate if the marked point is given by the equation $z=0$. Therefore, we conclude that
\[
T_S\mf M_{g,n,0}=H^1(S,\mc A_S(-P)).
\]
Now we introduce the second kind of puncture: \deffont{Ramond puncture}. In this situation, we assume the underlying supermanifold $(C,\mc O_S)$ is still smooth, but the odd distribution $\mc D$ is no longer everywhere non-integrable. The generator $v$ in the local form in Lemma \ref{v in superconformal coord} is replaced by
\[
v:=\pdv{\theta}+z\theta \pdv{z}.
\]
In other words, $\mc D$ fails to be a maximally nonintegrable distribution along the divisor $z=0$. Therefore, a SRS with a Ramond puncture is technically no longer a SRS. We can also have multiple Ramond punctures, by which we mean $\mc D$ fails to be a distribution along multiple divisors, and near each divisor we can find local coordinates as described above. The topology of SRS restricts the number of Ramond punctures to always be even.
In \cite{DW13}, it was shown that in the presence of Ramond punctures $R=q_1+\cdots+q_{2r}$, the sheaf of superconformal vector fields is given by
\begin{equation}\label{autosheaf with R punctures}
    \mc A_S\cong (T_S/\mc D)\otimes \mc O_S(-R).
\end{equation}
Therefore, for a SRS $S$ with NS punctures $P=p_1+\cdots+p_n$ and R punctures $R=q_1+\cdots+q_{2r}$, we still have
\[
T_S\mf M_{g,n,2r}=H^1(S,\mc A_S(-P)),
\]
but now $\mc A_S$ is given by \eqref{autosheaf with R punctures}.

\section{Settings}\label{section: settings}
Our setup will be the following: Let $\pi:Y\to X$ be a branched cover of SRS. We use $g$ to denote the genus of $Y$. Let us fix $g_0=2$ to be the genus of $X$ for the rest of the paper, and we require $X$ to have only 1 branch point. Let $d$ be the degree of $\pi$. If $p\in X$ the the branch point, and $\pi^{-1}(p)=\{q_1,...,q_s\}$. Let $a_j$ denote the local degree of $\pi$ at $q_j$, for $1\leq j\leq s$. Then we define the \deffont{ramification pattern} $\rho=(a_1,...,a_s)$. Ramification points with odd local degree will correspond to NS punctures on $Y$, while ramification points of even local degree will correspond to R punctures on $Y$ after blow ups. See section 3.4 of \cite{DW13} for details. There will always be an even number of R punctures \cite{DO23}, so we can denote the number of R punctures on $Y$ by $2r$, and let $n$ denote the number of NS punctures on $Y$. So $n+2r=s$.

Now we allow the curves $Y,X$ to vary continuously, hence the covering map $\pi:Y\to X$ and the branch point in $X$ also vary continuously. But we require the genera $g,g_0$ of $Y,X$, and the ramification patter $\rho$ to be fixed throughout the process. There is a moduli space $\mf M_{d,\rho}$ parameterizing all such branched coverings. 

The map
\[
\Phi: \mf M_{d,\rho}\to \mf M_{2,1,0}\qquad (\pi:Y\to X)\mapsto X
\]
is a finite covering by Lemma 14 of \cite{DO23}, and it was already established that $\mf M_{2,1,0}$ is not projected by \cite{DW13}. Therefore, $\mf M_{d,\rho}$ is not projected by Theorem \ref{cover of nonproject is nonproject}. Moreover, 

\begin{prop}
the map
\[
\Psi: \mf M_{d,\rho}\to \mf{M}_{g,n,2r}\qquad (\pi:Y\to X)\mapsto Y
\]
is an immersion of supermanifolds.
\end{prop}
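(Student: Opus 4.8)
### Proof Proposal

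The plan is to verify the two conditions in the definition of an immersion of supermanifolds: first, that the induced map on reduced spaces $|\Psi|\colon |\mf M_{d,\rho}| \to |\mf M_{g,n,2r}|$ is an immersion of ordinary complex manifolds, and second, that the map is unramified in the odd directions, i.e., that $d\Psi$ is injective on tangent spaces at every point. Since $\Phi$ is already known to be a finite covering, I would leverage the rigidity that comes from the branched-covering data: a point of $\mf M_{d,\rho}$ records the cover $\pi\colon Y\to X$ together with $X$ and its branch structure, and the key observation is that $Y$ together with the ramification pattern $\rho$ essentially determines the covering map back, so $\Psi$ should be generically injective and, more importantly, infinitesimally injective.

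The technical heart is the computation on tangent spaces. Recall that $T_S\mf M_{g,n,2r} = H^1(Y,\mc A_Y(-P))$ where $\mc A_Y$ is the sheaf of superconformal vector fields with the R-puncture modification \eqref{autosheaf with R punctures}, and similarly the tangent space to $\mf M_{d,\rho}$ is an appropriate cohomology group of deformations of the covering data. I would identify $d\Psi$ with the natural map on $H^1$ induced by the covering $\pi$, and show it is injective. Concretely, deformations of the pair $(\pi\colon Y\to X)$ that fix the ramification pattern inject into deformations of $Y$ alone: the first-order deformations of the cover are governed by a subsheaf of $\mc A_Y(-P)$ (superconformal vector fields on $Y$ compatible with the covering and the punctures), and the inclusion of this subsheaf into $\mc A_Y(-P)$ induces the tangent map $d\Psi$. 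Injectivity of $d\Psi$ then reduces to showing that the relevant connecting/comparison map in the long exact sequence in cohomology vanishes, using that $X$ has genus $g_0=2$ with a single branch point and that the ramification is rigidly prescribed.

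For the reduced-space statement, I would argue that $|\Psi|$ is an immersion by the standard Riemann–Hurwitz/Hurwitz-space picture in the bosonic setting: the Hurwitz space of branched covers with fixed ramification data maps to $\mf M_g$ by an immersion, since a cover with fixed $\rho$ is determined up to finitely many choices by its total space together with the branch divisor, and the derivative of this map is injective by the same cohomological comparison restricted to the even part. Combining the even and odd parts gives that $d\Psi$ is injective as a map of $\Z_2$-graded tangent spaces at every point, which is precisely the condition for $\Psi$ to be an immersion of supermanifolds.

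I expect the main obstacle to be the infinitesimal injectivity in the odd directions, i.e., controlling the odd part of $d\Psi$ on $H^1$ of the superconformal sheaves. Unlike the purely bosonic Hurwitz-space argument, here one must track how the R-puncture twist $\mc O_Y(-R)$ and the NS-puncture twist $(-P)$ interact with pullback along $\pi$, and ensure that no nontrivial first-order deformation of $Y$ arising from the odd moduli is killed when we forget the covering structure. The cleanest route will likely be to exhibit $d\Psi$ as the map on $H^1$ induced by an injection of sheaves on $Y$ and then show the cokernel contributes only to $H^1$ via a term that does not overlap with the image, so that the long exact cohomology sequence forces injectivity of $d\Psi$.
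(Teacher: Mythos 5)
Your proposal takes a genuinely different route from the paper, but as written it has a real gap: the crucial step, injectivity of $d\Psi$ on $H^1$, is never actually carried out. You correctly identify the tangent spaces as $H^1$ of sheaves of (twisted) superconformal vector fields and correctly propose to realize $d\Psi$ as the map induced by a lifting injection of sheaves $\mc A_X(-p)\to \pi_*\mc A_Y(-P-R)$. However, an injection of sheaves does not induce an injection on $H^1$ in general: from $0\to \mc A_X(-p)\to \pi_*\mc A_Y(-P-R)\to \mc Q\to 0$ one gets $H^0(\mc Q)\to H^1(\mc A_X(-p))\to H^1(\pi_*\mc A_Y(-P-R))$, and you must kill the connecting map. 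Your closing sentence gestures at this (``show the cokernel contributes only to $H^1$ via a term that does not overlap with the image'') but defers it as an expected obstacle rather than resolving it. The missing idea --- which is exactly how both \cite{DO23} and this paper's Proposition \ref{NBS splits} handle it --- is the Galois argument: pass to a $G$-Galois cover (or the Galois closure), observe that lifts of vector fields are $G$-invariant, so the lifting map identifies $H^1(X,\mc A_X(-p))$ with the $G$-invariant part of $H^1(Y,\mc A_Y(-P-R))$, which sits as a direct summand; injectivity (and, for free, the splitting of the normal bundle sequence) then follows. Without this or an equivalent mechanism, your outline does not close.

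For comparison, the paper's own proof is a two-line reduction: it cites Lemma 15 of \cite{DO23}, which states that the composition $F\circ\Psi$ with the forgetful map $F:\mf M_{g,n,2r}\to\mf M_{g,0,2r}$ is an immersion, and then uses the elementary fact that if $d(F\circ\Psi)=dF\circ d\Psi$ is injective then $d\Psi$ is injective. If you want a self-contained argument along your lines, the Galois-invariance computation is the piece you need to supply; otherwise the efficient move is the paper's, namely to quote the known immersion statement for the composite and peel off $\Psi$.
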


\begin{proof}
    By Lemma 15 of \cite{DO23}, the composition $F\circ \Psi$ of $\Psi:\mf M_{d,\rho}\to \mf M_{g,n,2r}$ with the forgetful map $F:\mf M_{g,n,2r}\to \mf M_{g,0,2r}$ is an immersion. Hence $\Psi$ itself must be an immersion.
\end{proof}

Moreover, the associated normal bundle sequence of $\Psi$ splits, which we will check. The proof is basically the same as the proof of Proposition 5.2 of \cite{DW13}, but we can modify the proof and make it slightly more explicit by constructing a concrete lifting map of tangent vector fields of the base to the branched cover, in the case where the cover is Galois. Hence, we include the proof here.

\begin{prop}\label{NBS splits}
     Let $\psi:\mc {SM}_{d,\rho}\to \mc{SM}_{g,n,2r}$ denote the bosonic reduction of the map $\Psi$. Then the induced normal bundle sequence on the reduced space
    \[
    0\to T_{\mc {SM}_{d,\rho}}\to \psi^*T_{\mc {SM}_{g,n,2r}}\to  N\to 0
    \]
    is a split exact sequence.
\end{prop}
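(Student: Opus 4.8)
The plan is to produce an explicit splitting of the sequence by exhibiting a retraction of the inclusion $T_{\mc{SM}_{d,\rho}}\hookrightarrow \psi^*T_{\mc{SM}_{g,n,2r}}$, built from the trace (averaging) operator of the degree $d$ cover $\pi$. First I would rewrite both sides deformation-theoretically. Since $\Phi$ is a finite covering, $T_{\mc{SM}_{d,\rho}}$ is identified fibrewise with $H^1(X,\mc L_X)$, where $\mc L_X$ is the reduced (even) infinitesimal-automorphism sheaf of the once-marked base, namely $\mc L_X\cong T_X(-p)$ at the bosonic level. Likewise $\psi^*T_{\mc{SM}_{g,n,2r}}$ is $H^1(Y,\mc L_Y)$ for the reduced automorphism sheaf $\mc L_Y$ of $Y$ with its $n$ NS and $2r$ R punctures. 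Under these identifications the differential $d\psi$ is exactly the pullback-of-deformations map, i.e.\ the map $\pi^*\colon H^1(X,\mc L_X)\to H^1(Y,\mc L_Y)$ induced by functoriality of Kodaira--Spencer classes for the family obtained by pulling back a deformation of $X$ along $\pi$.

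The technical heart is the sheaf identity
\[
\mc L_Y\;\cong\;\pi^*\mc L_X .
\]
For the tangent-bundle part this follows from ramification: in local coordinates $z=\pi(w)=w^{a_j}$ the tangent map $d\pi\colon T_Y\to \pi^*T_X$ vanishes to order $a_j-1$ at $q_j$, so $T_Y\cong \pi^*T_X(-\sum_j(a_j-1)q_j)$, while $\pi^*p=\sum_j a_j q_j$ and the total puncture divisor is $\Sigma_Y=\sum_j q_j$; combining these gives $T_Y(-\Sigma_Y)\cong \pi^*(T_X(-p))$. The point of fixing the ramification pattern and placing a puncture at each ramification point is precisely that these twists balance. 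I would then verify that the same balancing persists for the actual reduced deformation sheaves, tracking the extra $\mc O(-R)$ twist coming from $\mc A_S=(T_S/\mc D)\otimes\mc O_S(-R)$ and the blow-ups used to create the R punctures of even local degree.

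Granting $\mc L_Y\cong\pi^*\mc L_X$, the projection formula gives
\[
H^1(Y,\mc L_Y)\;\cong\;H^1\bigl(X,\mc L_X\otimes\pi_*\mc O_Y\bigr),
\]
and $d\psi=\pi^*$ becomes the map induced by the unit $\mc O_X\to\pi_*\mc O_Y$. In characteristic $0$ the trace splits this unit: $\pi_*\mc O_Y\cong\mc O_X\oplus E$ with retraction $\tfrac1d\operatorname{tr}$. When the cover is Galois with group $G$ this is the decomposition into the trivial and nontrivial $G$-isotypic summands, and the retraction is the averaging operator $\frac{1}{|G|}\sum_{\sigma\in G}\sigma$; this is the concrete lift/descent of vector fields between $X$ and $Y$. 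Tensoring with $\mc L_X$ and taking $H^1$ yields
\[
H^1(Y,\mc L_Y)\;\cong\;H^1(X,\mc L_X)\ \oplus\ H^1(X,\mc L_X\otimes E),
\]
in which the first summand is $\operatorname{im}(d\psi)=T_{\mc{SM}_{d,\rho}}$ (using that $\Psi$ is an immersion, so $d\psi$ is injective) and the second is a complement realizing $N$. Since the trace map (resp.\ the $G$-action) is canonical and exists in the universal family over $\mc{SM}_{d,\rho}$, this decomposition is a direct-sum decomposition of the relative $H^1$ sheaves, hence a genuine splitting of the exact sequence of bundles rather than a merely fibrewise one.

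The step I expect to be the main obstacle is the sheaf identity $\mc L_Y\cong\pi^*\mc L_X$ with all puncture twists included: the Riemann--Hurwitz bookkeeping for the plain tangent bundle is immediate, but one must check that the $\mc O(-R)$ twist and the even-degree R ramification---where the SRS structure degenerates and blow-ups intervene---do not spoil the match. Once that identity is in place, the projection formula and the characteristic-zero trace splitting are formal, and the globalization over the moduli family is standard.
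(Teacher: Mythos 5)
Your proposal is correct, but it reaches the splitting by a genuinely different route than the paper. The paper constructs the lift of vector fields explicitly in local coordinates ($z=w^k$, sending $f(z)\partial_z$ to $f(w^k)/(kw^{k-1})\,\partial_w$), observes that for a $G$-Galois cover the image of the lift is exactly the $G$-invariant subsheaf $\pi_*T_Y(-P_Y-R_Y)^G$, splits off its complement, and then handles non-Galois covers by passing to the Galois closure and restricting the splitting through a diagram chase. You instead package the same local computation into the canonical isomorphism $T_Y(-P_Y-R_Y)\cong\pi^*\bigl(T_X(-p)\bigr)$ (your Riemann--Hurwitz bookkeeping is right: $T_Y\cong\pi^*T_X(-\sum_j(a_j-1)q_j)$ and subtracting the full puncture divisor $\sum_j q_j$ yields $\pi^*T_X(-\sum_j a_jq_j)=\pi^*(T_X(-p))$, with the R punctures already absorbed into the divisor, so the worry you flag about the $\mc O(-R)$ twist evaporates at the bosonic level, which is all the proposition concerns), and then invoke the projection formula together with the characteristic-zero trace retraction of $\mc O_X\to\pi_*\mc O_Y$. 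One can check that under your isomorphism the unit-of-adjunction map is exactly the paper's lift $L$, so the two splittings agree where both are defined; the advantage of your version is that the trace works uniformly for arbitrary finite covers, eliminating the paper's separate Galois-closure step, and, being canonical, it globalizes over the family without further argument. The paper's version buys a concrete coordinate description of the lifted deformation, which is what the authors say they wanted to make explicit.
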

\begin{proof}
    We pick a branched cover $\pi:(Y,\mc L_Y)\to (X,\mc L_X)$ of spin curves, representing a point in $\mc{SM}_{d,\rho}$. First, we assume that the cover is $G$-Galois. Under $\psi$ this point goes to $(Y,\mc L_Y)$. We note that if $R_Y$ is the divisor on $Y$ corresponding to the Ramond punctures, then $\mc L_Y^2\cong K_Y(-R_Y)$. Deformation theory gives an identification of $\psi$ at this point:
    \[
    \psi:H^1(X,T_X(-P_X)) \to H^1(Y,T_Y(-P_Y-R_Y)).
    \]
    Since $\psi$ takes the deformation of the base to the corresponding uniquely determined deformation of the branched cover, it is induced by lifting vector fields. In fact, there is an injection of sheaves
    \begin{equation}\label{bosonic L lift}
        L:T_X(-P_X)\to \pi_*T_Y(-P_Y-R_Y)
    \end{equation}
    whose induced map on $H^1$ is $\psi$. To see that $L$ is an injection, we cover $X$ locally by small open sets $X=\bigcup_\alpha U_\alpha$, such that $U_\alpha$ and $\pi^{-1}(U_\alpha)$ only contain at most one branch (or ramification) point for all $\alpha$. If $U_\alpha$ does not contain any marked points, then by choosing sufficiently small open covers, we may assume $\pi:\pi^{-1}(U_\alpha)\to U_\alpha$ is an isomorphism, hence there is no problem constructing $L$ on $U_\alpha$. 

    Now we analyze the situation where $q\in \pi^{-1}(U_\alpha)$ is a ramification point and $p=\pi(q)\in U_\alpha$ is a branch point. Let $e_q=k>1$ be the local degree of $q$. Then locally we may choose holomorphic coordinates $w$ on $Y$ and $z$ on $X$ such that $w(q)=z(p)=0$, and such that locally $\pi$ is given by $z=w^k$. To construct $L$, locally a section in $\Gamma(U_\alpha,T_X(-P_X))$ is of the form
    \[
    \chi=f(z)\pdv{z}
    \]
    where $f$ is a holomorphic function with $f(0)=0$. A section in $\Gamma(U_\alpha,\pi_*T_Y(-P_Y-R_Y))$ is of the form
    \[
    \td\chi=g(w)\pdv{w}
    \]
    with $g(0)=0$, where now $\td\chi$ is viewed as a vector field on $\pi^{-1}(U_\alpha)\subset Y$. Now the condition that $\td\chi$ is a lift of $\chi$, namely $\pi_*\td\chi=\chi$, reads
    \[
    \pi_*\td\chi=g(w)\pdv{z}{w}\pdv{z}=kw^{k-1}g(w)\pdv{z}=f(w^k)\pdv{z}=\chi,
    \]
    which implies
    \[
    g(w)=f(w^k)/kw^{k-1}.
    \]
    Note that the above expression is a well-defined holomorphic function: since $f(0)=0$, we have $f(w^k)=w^kh(w)$ for some holomorphic $h$, and furthermore $g(0)=0$. It is also clear from the expression that $g$ is uniquely determined by $f$, hence $\td\chi$ is uniquely determined by $\chi$. Thus, we can also construct $L$ on this small neighborhood $U_\alpha$, which is injective. Therefore, this construction gives rise to an injection by lifting infinitesimal automorphisms of vector fields on sufficiently small open sets near each ramification point, and these lifts are compatible on the intersections of small open sets. Hence, this glues to an injection of sheaves \eqref{bosonic L lift}. The induced map on $H^1$ is precisely
    \[
    \psi:H^1(X,T_X(-P_X))\to H^1(Y,T_Y(-P_Y-R_Y)).
    \]
    We also note that if $\pi:Y\to X$ is $G$-Galois and $\td\chi$ is a lift of $\chi\in \Gamma(X,T_X)$, then $\td\chi$ must be a $G$-invariant vector field. Hence, the lift in \eqref{bosonic L lift} splits
    \[
    L:T_X(-P_X)\to \pi_*T_Y(-P_Y-R_Y)^G\oplus\mc Q
    \]
    into the $G$-invariant part and some other part $\mc Q$. The inclusion $$T_X(-P_X)\to \pi_*T_Y(-P_Y-R_Y)^G$$ is actually an isomorphism, where the isomorphisms are given by lift and projection. Hence taking $H^1$ we have
    \[
    \psi:H^1(X,T_X(-P_X))\to H^1(Y,T_Y(-P_Y-R_Y))^G\oplus H^1(X,\mc Q)
    \]
    where we used 
    \[
    H^1(X,\pi_*T_Y(-P_Y-R_Y)^G)=H^1(Y,T_Y(-P_Y-R_Y)^G)\cong H^1(Y,T_Y(-P_Y-R_Y))^G
    \]
    and $\psi$ is given by inclusion into the $G$-invariant part in the summand. Hence, it immediately follows that the normal bundle sequence splits.

    Finally, in the case where the covering is not Galois, we pass to the Galois closure of $\pi:Y\to X$. Let $\hat{Y}$ be the $G$-Galois closure of $Y$. Then there is a covering $\hat{\pi}:\hat{Y}\to Y$ with Galois group $H$, where $H<G$ is the stabilizer subgroup of an unramified point of $Y$. The pullback $\hat{\pi}^*$ includes cohomologies of $Y$ as the $H$-invariant part of cohomologies of $\hat{Y}$:
    \[
    \hat{\pi}^*: H^1(Y,T_Y(-P_Y-R_Y))\cong H^1(\hat{Y},T_{\hat{Y}}(-P_{\hat{Y}}-R_{\hat{Y}}))^H\hookrightarrow H^1(\hat{Y},T_{\hat{Y}}(-P_{\hat{Y}}-R_{\hat{Y}})).
    \]
    Introducing the notation $D_{\hat{Y}}=P_{\hat{Y}}+R_{\hat{Y}}$ and define $D_Y,D_X$ similarly, we have a commutative diagram with exact rows given by the normal bundle sequences evaluated at corresponding fibers:
    \[\begin{tikzcd}
	0 & {H^1(X,T_X(-D_X))} & {H^1(Y,T_Y(-D_Y))} & N & 0 \\
	0 & {H^1(X,T_X(-D_X))} & {H^1(\hat{Y},T_{\hat{Y}}(-D_{\hat{Y}}))} & {\hat{N}} & 0
	\arrow[from=1-1, to=1-2]
	\arrow[from=1-2, to=1-3]
	\arrow[no head, from=1-2, to=2-2]
	\arrow[shift left, no head, from=1-2, to=2-2]
	\arrow[from=1-3, to=1-4]
	\arrow["{\hat{\pi}^*}"', hook, from=1-3, to=2-3]
	\arrow[from=1-4, to=1-5]
	\arrow["i", hook, from=1-4, to=2-4]
	\arrow[from=2-1, to=2-2]
	\arrow[from=2-2, to=2-3]
	\arrow[from=2-3, to=2-4]
	\arrow[from=2-4, to=2-5]
    \end{tikzcd}\]
    where $i:N\to \hat{N}$ is the unique map that makes the square commute. A simple diagram chase shows that $i$ is injective, hence the spaces in the upper row can be viewed as subsets included in the corresponding spaces in the bottom row. By the previous argument, we already know that there exists a splitting $s:\hat{N}\to H^1(\hat{Y},T_{\hat{Y}}(-D_{\hat{Y}}))$. Restrict this splitting to the subspace gives an induced splitting $N\to H^1(Y,T_Y(-D_Y))$, concluding the proof.
\end{proof}

Hence, we conclude that $\mf M_{g,n,2r}$ is not projected by Theorem \ref{immersion+NBS splits gives non-project}. The necessary and sufficient condition for such an immersion $\Psi$ to exist, or equivalently for the tuple $(g,n,r)$ to be \deffont{realizable}, using the terminology of \cite{DO23}, is that the genus $g$ determined by the Hurwitz formula
\begin{equation}\label{hurwitz formula}
    g=1+d(g_0-1)+\frac{1}{2}\sum_{j=1}^{s}(a_j-1)
\end{equation}
is nonnegative, where we recall that $s=n+2r$, and $\rho=(a_1,...,a_{s})$ is the ramification pattern of $\pi$, with each $a_j$ a local degree such that $\sum_j a_j=d$. Moreover, Theorem 4 of \cite{huse} ensures this is the only constraint: As long as the configurations $\rho,d,g_0$ make $g\geq 0$ in \eqref{hurwitz formula}, there exists a branched cover $\pi:Y\to X$ with the specified behavior.

Our next task is to determine, to the best of our ability, the condition for the tuple $(g,n,r)$ to be realizable. This is given by Theorem \ref{main theorem} and Theorem \ref{main result} in the next section.

\section{Proof of Main Result}\label{section: proof of main result}
Using the minimal model above with $g_0=2$ and only one branched point on $X$, we can prove our first nonprojectedness theorem for supermoduli space. The proof is combinatorial.

\begin{thm}\label{main theorem}
    Let $g,n,r$ be positive integers. The supermoduli space $\mf M_{g,n,2r}$ is not projected if the following two conditions are met:
    \begin{enumerate}
        \item [(1)] genus bound: $g\geq n+5r+1$;
        \item [(2)] congruence condition: $2g-2+n+2r\equiv 0\mod 3$.
    \end{enumerate}
\end{thm}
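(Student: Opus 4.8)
The plan is to reduce the theorem to a purely combinatorial realizability statement, since all of the geometric machinery is already assembled in Section \ref{section: settings}. Concretely, suppose the tuple $(g,n,r)$ is realizable: there is a branched cover $\pi:Y\to X$ with $g_0=2$, a single branch point $p$ (which is the NS puncture of $X\in\mf M_{2,1,0}$), and a ramification pattern $\rho=(a_1,\dots,a_s)$ in which exactly $n$ of the local degrees are odd (giving NS punctures on $Y$) and exactly $2r$ are even (giving R punctures on $Y$). Then the immersion $\Psi:\mf M_{d,\rho}\to\mf M_{g,n,2r}$ exists, its reduced normal sequence splits by Proposition \ref{NBS splits}, and $\mf M_{d,\rho}$ is a finite cover of the non-projected space $\mf M_{2,1,0}$, so $\omega_2(\mf M_{d,\rho})\neq 0$ by Theorem \ref{cover of nonproject is nonproject}. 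Theorem \ref{immersion+NBS splits gives non-project} then gives $\omega_2(\mf M_{g,n,2r})\neq 0$, hence non-projectedness. So the entire content of the theorem is to exhibit such a $\rho$ under conditions (1) and (2).

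First I would pin down the degree. Feeding $g_0=2$ and $s=n+2r$ into \eqref{hurwitz formula} and using $\sum_j a_j=d$ yields
\[
g=1+d+\tfrac12(d-s),\qquad\text{equivalently}\qquad 3d=2g-2+n+2r .
\]
Thus $d$ is forced to equal $\frac{2g-2+n+2r}{3}$, which is a positive integer precisely when $2g-2+n+2r\equiv 0\pmod 3$. In other words, congruence condition (2) is exactly the integrality of the degree $d$.

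Next I would construct the partition. The task is to write $d$ as a sum of $n$ odd positive integers (each $\geq 1$; the minimal one is an unramified point lying over the NS-marked branch point of $X$, which still counts as an NS puncture) together with $2r$ even positive integers (each $\geq 2$, since an R puncture needs genuine even ramification). The minimal admissible total is $n\cdot 1+2r\cdot 2=n+4r$, and from any admissible pattern one may add $2$ to a chosen part without disturbing its parity; hence every total of the correct parity that is $\geq n+4r$ is attainable. The parity is automatic: reducing $3d=2g-2+n+2r$ modulo $2$ gives $d\equiv n\pmod 2$, which matches the parity of a sum of $n$ odd and $2r$ even numbers. Therefore a valid $\rho$ exists if and only if
\[
\frac{2g-2+n+2r}{3}\geq n+4r\iff 2g-2\geq 2n+10r\iff g\geq n+5r+1,
\]
which is exactly genus bound (1). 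Finally, Theorem 4 of \cite{huse} guarantees that, once such a $\rho$ with $g\geq 0$ is specified, an honest connected branched cover with that data exists, completing the realizability.

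The arithmetic above is elementary; the only genuinely delicate point is the bookkeeping of minimal local degrees — that an NS puncture may sit over the branch point with local degree $1$, costing nothing beyond being counted, whereas an R puncture demands even local degree and hence costs at least $2$. Getting these minima right is what makes the threshold come out as $g\geq n+5r+1$ rather than something weaker; had odd ramification been required to be $\geq 3$ one would instead be led to $g\geq 4n+5r+1$. I would also verify (as above) that the parity of $d$ is consistent with the partition, and that $d\geq n+4r$ forces a nontrivial connected cover so that Husemoller's existence theorem genuinely applies.
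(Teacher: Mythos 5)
Your proposal is correct and follows essentially the same route as the paper: identify the congruence condition with integrality of the degree $d=\tfrac{1}{3}(2g-2+n+2r)$, take the minimal pattern $(1,\dots,1,2,\dots,2)$ of total $n+4r$ to get the genus bound, check the parity obstruction (your $d\equiv n\bmod 2$ is equivalent to the paper's observation that $d-d_{\min}$ is even), and realize any admissible $d$ by adding $2$ to parts, invoking Husemoller for existence of the cover. The surrounding geometric reduction you describe is exactly the one assembled in Section \ref{section: settings}, so no further comment is needed.
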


\begin{proof}
    Substituting $g_0=2$ in \eqref{hurwitz formula} shows that $2g=2+3d-n-2r$. Hence, we must have $2g-2+n+2r=3d\equiv 0$ mod 3. This shows that if $(g,n,r)$ is a valid tuple arising from a branched cover of a $g_0=2$ SRS with 1 puncture, then the congruence condition must be satisfied. To derive the genus bound, we want to minimize $g$ according to \eqref{hurwitz formula} with $g_0=2$ and given $n,r>0$. The minimal choice of the ramification pattern is
    \[
    \rho_{\text{min}}=(\underbrace{1,...,1}_{n},\underbrace{2,...,2}_{2r}).
    \]
    The corresponding minimal degree of the branched cover is $d_{\text{min}}=1\cdot n+2\cdot 2r=n+4r$. By the Hurwitz formula, the minimal genus is given by $2g_{\text{min}}-2=3d_{\text{min}}-n-2r$. Hence the solution is $g_{\text{min}}=n+5r+1$. Hence, the genus bound is also derived. This shows that for the tuple $(g,n,r)$ to be realizable, the genus bound and congruence condition are necessary.

    Now it remains to show that if these two conditions are met, then there exists a branched cover $\pi:Y\to X$ with the specified behavior.  Given the congruence condition, we note that the degree of the cover
    \[
    d=\frac{1}{3}(2g-2+n+2r)
    \]
    is an integer. Moreover, the genus bound $g\geq n+5r+1$ implies $2g-2\geq 2n+10r$. Substituting this into the expression above gives
    \[
    3d=2g-2+n+2r\geq (2n+10r)+n+2r=3n+12r.
    \]
    Hence $d\geq n+4r=d_{\text{min}}$. Therefore, $d$ is at least as large as the minimal possible degree $d_{\text{min}}$. 
    We must now show that there exists a partition $\rho$ of $d$ that has exactly $n$ odd parts and $2r$ even parts. The proof is constructive. Let $\delta_d=d-d_{\text{min}}$. The calculation above shows that
    \[
    3\delta_d=3(d-d_{\text{min}})=(2g-2+n+2r)-(2g_{\text{min}}-2+n+2r)=2(g-g_{\text{min}}).
    \]
    This implies that $3\delta_d$ is an even number. Since 3 is odd, $\delta_d$ itself must be an even number. Let $\delta_d=2k$ for some nonnegative integer $k$. We now need to find a partition of $d=d_{\text{min}}+2k$ with the correct number of even and odd parts. We start with the minimal partition $\rho_{\text{min}}$. We can modify this partition to increase its sum by an even number, $2k$, without changing the parity count of its parts. For example, we can replace a part $a_j=1$ with a part $a_j+2=3$. This increases the total sum by 2, and the new part is still odd, so the parity of the ramification pattern is preserved, while the total degree increases by 2. By repeatedly applying such modifications $k$ times, we can increase the sum of the partition from $d_{\text{min}}$ to $d=d_{\text{min}}+2k$ while preserving the number of even and odd parts. The resulting partition $\rho$ has sum $d$ and corresponds to the puncture configuration $(n,2r)$. By construction, this partition, when used in the Hurwitz formula, yields the genus $g$. To see this, we note that
    \begin{equation}\label{g_min hurwitz}
        g_{\text{min}}=1+d_{\text{min}}+\frac{1}{2}\sum_j (a_{j,\text{min}}-1).
    \end{equation}
    Since
    \[
    6k=3\delta_d=2(g-g_{\text{min}}),
    \]
    we have $g-g_{\text{min}}=3k$, and we also have
    \begin{equation}\label{hurwitz difference}
        (d-d_{\text{min}})+\frac{1}{2}\sum_{j=1}^s (a_j-a_{j,\text{min}})=3k=g-g_{\text{min}}.
    \end{equation}
    Now adding \eqref{g_min hurwitz} and \eqref{hurwitz difference} gives the Hurwitz formula
    \(
    g=1+d+\frac{1}{2}\sum_{j=1}^s (a_j-1),
    \)
    as desired. This concludes the proof.
\end{proof}
Now we aim to remove the congruence condition in Theorem \ref{main theorem} at the cost of a slightly stronger genus bound. Say we already know that $\mf M_{g,n,2r}$ is not projected, we consider the forgetful map
\[
F: \mf M_{g,n,2r}\to \mf M_{g,n-i,2r}
\]
for $i=1,2$, and show that the composition
\[
\Psi'=F\circ \Psi: \mf M_{d,\rho}\to \mf M_{g,n-i,2r}
\]
is still an immersion, and the bosonic normal bundle sequence splits. Since $\Phi: \mf M_{d,\rho}\to \mf M_{2,1,0}$ is still a finite covering map, $\mf M_{d,\rho}$ is not projected, we conclude that $\mf M_{g,n-i,2r}$ is not projected by Theorem \ref{cover of nonproject is nonproject} and Theorem \ref{immersion+NBS splits gives non-project}. Then, the congurence condition in Theorem \ref{main theorem} will be violated, so we can essentially remove this condition. 

In other words, the strategy is as follows: Given a triple $(g,n,r)$. Our goal is to show that $\mf M_{g,n,2r}$ is not projected. We want to find a helper tuple $(g,n_0,r)$ that satisfies the original two conditions in Theorem \ref{main theorem}, and such that we have the forgetful immersion $\Psi':\mf M_{d,\rho}\to\mf M_{g,n_0,2r}\to \mf M_{g,n,2r}$, which would then show that $\mf M_{g,n,2r}$ is not projected. Therefore, we must find a new genus bound for $(g,n,r)$ such that if the genus bound is met, then such a helper tuple $(g,n_0,r)$ is gauranteed to exist. For a given $(g,n,r)$ with fixed $g,r$, the helper $(g,n_0,2r)$ must satisfy
\[
n\leq n_0\leq g-5r-1
\]
where the first inequality is because we need the existence of the forgetful map, and the second inequality comes from the genus bound in Theorem \ref{main theorem}. Now the congruence condition says that we must have
\[
n_0\equiv -2g+2-2r\mod 3.
\]
Since $g$ and $r$ are fixed, the number $-2g+2-2r$ is also fixed. Thus, the problem reduces to the following: can we find an integer $n_0$ on the interval $[n,g-5r-1]$ with a specified residue mod 3? Clearly, this can be done if $[n,g-5r-1]$ contains at least three integers. In other words, we only need $g-5r-1-n+1\geq 3$, or equivalently $g\geq n+5r+3$. Hence, we obtain

\begin{thm}\label{main result}
Let $g,n,r$ be positive integers. The supermoduli space $\mf M_{g,n,2r}$ is not projected if $g\geq n+5r+3$.
\end{thm}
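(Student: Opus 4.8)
The plan is to deduce Theorem \ref{main result} from Theorem \ref{main theorem} by exploiting the forgetful map, thereby trading the congruence condition for the slightly stronger genus bound $g\geq n+5r+3$. The guiding idea is that nonprojectedness of $\mf M_{g,n_0,2r}$ for a well-chosen auxiliary number $n_0$ of NS punctures can be transported down to $\mf M_{g,n,2r}$ for any $n\leq n_0$, \emph{provided} the immersion-with-split-normal-bundle structure underlying Theorems \ref{cover of nonproject is nonproject} and \ref{immersion+NBS splits gives non-project} survives composition with the forgetful map. So the proof splits into a geometric step (the forgetful composition remains good) and a combinatorial step (the auxiliary $n_0$ exists).

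For the geometric step, I would fix the branched-cover data $(d,\rho)$ produced by Theorem \ref{main theorem} for the helper tuple $(g,n_0,r)$, and consider the forgetful map $F:\mf M_{g,n_0,2r}\to \mf M_{g,n,2r}$ that drops $n_0-n$ of the NS punctures. One must check that $\Psi'=F\circ\Psi:\mf M_{d,\rho}\to \mf M_{g,n,2r}$ is still an immersion whose bosonic normal bundle sequence splits. I expect this to follow verbatim from the proof of Proposition \ref{NBS splits}: forgetting an NS puncture merely relaxes a vanishing condition $f(0)=0$ on the superconformal vector fields at that marked point, and has no bearing on the local lift-of-vector-fields construction $L$ at the ramification points, nor on its $G$-invariant splitting in the Galois case (and hence, by the same diagram chase, in the non-Galois case). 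Since $\Phi:\mf M_{d,\rho}\to \mf M_{2,1,0}$ remains a finite cover of the non-projected space $\mf M_{2,1,0}$, Theorems \ref{cover of nonproject is nonproject} and \ref{immersion+NBS splits gives non-project} then give nonprojectedness of $\mf M_{g,n,2r}$ as soon as a valid $(g,n_0,r)$ is found.

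For the combinatorial step, I would impose the two hypotheses of Theorem \ref{main theorem} on the helper: the genus bound forces $n_0\leq g-5r-1$, the existence of $F$ forces $n\leq n_0$, and the congruence condition pins $n_0$ to the single residue class $n_0\equiv 2-2g-2r\pmod 3$. Thus the whole question reduces to locating an integer of prescribed residue mod $3$ inside the interval $[n,\,g-5r-1]$. Any three consecutive integers exhaust all residues mod $3$, so it suffices that this interval contain at least three integers, i.e. $(g-5r-1)-n+1\geq 3$, which is exactly $g\geq n+5r+3$. In particular $n_0$ can always be taken in $\{n,n+1,n+2\}$, so at most two NS punctures are forgotten.

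I expect the main obstacle to be the first step, namely confirming that the forgetful composition $\Psi'$ genuinely preserves both the immersion property and the splitting of the bosonic normal sequence; the combinatorial count is then immediate. The subtle bookkeeping point to get right is that $g$, $r$, and the underlying branched cover $\pi:Y\to X$ are unchanged under forgetting NS punctures, so the \emph{same} $\mf M_{d,\rho}$ (chosen for the helper $(g,n_0,r)$) sits over $\mf M_{g,n,2r}$ via $\Psi'$, and the Hurwitz realizability used in Theorem \ref{main theorem} applies unchanged.
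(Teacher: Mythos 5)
Your overall strategy and your combinatorial step coincide exactly with the paper's: find a helper $n_0\in[n,\,g-5r-1]$ in the prescribed residue class mod $3$, which exists once the interval has length at least $3$, i.e.\ $g\geq n+5r+3$, and then push nonprojectedness down through the forgetful map $F$ (indeed, as you note, at most two NS punctures are ever forgotten). Where you diverge is in how the geometric step is justified, and your version is the softer of the two. The paper does \emph{not} rerun the argument of Proposition \ref{NBS splits} with the smaller divisor: it establishes that $\Psi'=F\circ\Psi$ is an immersion by citing Lemma 15 of \cite{DO23} (the further composition with the forgetful map down to $\mf M_{g,0,2r}$ is already an immersion, so the intermediate composite must be), and it obtains the splitting of the bosonic normal sequence of $\Psi'$ from a general lemma: if $i$ is an immersion with split normal sequence, $f$ is a fibration, and $f\circ i$ is again an immersion, then the normal sequence of $f\circ i$ splits (proved by a snake-lemma diagram chase identifying $N_{X,Z}$ with $N_{X,Y}/i^*T_{Y/Z}$ and pushing the given splitting to the quotient). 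Your claim that the splitting follows ``verbatim'' from Proposition \ref{NBS splits} glosses over a real point: the splitting there rests on identifying the image of the lift $L$ with the $G$-invariant part $\pi_*T_Y(-P_Y-R_Y)^G$, and when the divisor shrinks to $-P'_Y-R_Y$ the $G$-invariant part is a priori larger than the image of $T_X(-P_X)$ (one has to argue that invariance under the fiberwise $G$-action still forces vanishing at the forgotten points, and then repeat the Galois-closure reduction). The paper's fibration lemma sidesteps this entirely and is the cleaner route; your approach is salvageable but would need that extra argument spelled out.
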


Now it remains to show that $\Psi'$ is an immersion and the bosonic normal bundle sequence splits for $i=1,2$, by Theorem \ref{immersion+NBS splits gives non-project}.

\begin{prop}\label{Psi' still immersion i=1,2}
    The morphism $\Psi':\mf M_{d,\rho}\xrightarrow{\Psi}\mf M_{g,n,2r}\xrightarrow{F}\mf M_{g,n-i,2r}$ is an immersion of supermanifolds, for $i=1,2$.
\end{prop}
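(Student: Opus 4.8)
The plan is to obtain the immersion property of $\Psi'$ essentially for free from the stronger statement already invoked in the proof that $\Psi$ is an immersion, using the elementary principle that if a composite of morphisms is an immersion, then its first factor is an immersion. So I would not attempt to analyze $F$ directly; instead I would exhibit $\Psi'$ as the first factor of a composite that is known to be an immersion.

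The key structural observation is that the total forgetful map factors through the partial one. Write $G:\mf M_{g,n,2r}\to \mf M_{g,0,2r}$ for the map forgetting all the NS punctures, $F:\mf M_{g,n,2r}\to \mf M_{g,n-i,2r}$ for the map forgetting only $i$ of them (which is legitimate since $n\geq i$ in the situation where the proposition is applied), and $F':\mf M_{g,n-i,2r}\to \mf M_{g,0,2r}$ for the map forgetting the remaining $n-i$ punctures. By construction $G=F'\circ F$, whence $G\circ\Psi=F'\circ\Psi'$. I would then recall that Lemma 15 of \cite{DO23}, already used above, asserts precisely that $G\circ\Psi:\mf M_{d,\rho}\to \mf M_{g,0,2r}$ is an immersion. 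In tangent terms, at a point $Y$ the identification $T_Y\mf M_{g,m,2r}=H^1(Y,\mc A_Y(-P_m))$ realizes the forgetful differentials as the natural maps on $H^1$ induced by the sheaf inclusions $\mc A_Y(-P_n)\hookrightarrow \mc A_Y(-P_{n-i})\hookrightarrow \mc A_Y$, which come from dropping the vanishing conditions at the forgotten points; under these identifications the differential $d(G\circ\Psi)$ is injective on both its even and its odd part.

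Finally, since $d(G\circ\Psi)=dF'\circ d\Psi'$ and a composite of linear maps can be injective only when its inner factor is injective, it follows that $d\Psi'$ is injective on every fiber. A morphism of supermanifolds whose differential is fiberwise injective has constant, full rank, hence locally split image and locally free cokernel, so it is an immersion; therefore $\Psi'$ is an immersion. The work here is essentially bookkeeping rather than any hard estimate: one only needs to verify that the forgetful maps genuinely compose as $G=F'\circ F$, and that the injectivity argument is carried out on the full super differential (respecting the $\Z_2$-grading) rather than on the reduced spaces alone. Both points are immediate, so the entire content of the proposition is already packaged inside Lemma 15 of \cite{DO23}, and the proof is purely formal.
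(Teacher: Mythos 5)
Your proposal is correct and is essentially identical to the paper's own proof: both factor the total forgetful map as $F'\circ F$, invoke Lemma 15 of \cite{DO23} to get that $F'\circ\Psi'=G\circ\Psi$ is an immersion, and conclude that the first factor $\Psi'$ must itself be an immersion. The extra bookkeeping you supply (injectivity of the inner factor of an injective composite, on both even and odd parts) is exactly what the paper leaves implicit in the word ``hence.''
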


\begin{proof}
    This follows immediately from Lemma 15 of \cite{DO23}, which states that $\Psi'$ composed with the forgetful map $\mf M_{g,n-i,2r}\to \mf{M}_{g,0,2r}$ is an immersion. Hence it follows that $\Psi'$ is an immersion.
\end{proof}

\begin{prop}
    The normal bundle sequence associated with the bosonic reduction of $$\Psi':\mf M_{d,\rho}\xrightarrow{\Psi}\mf M_{g,n,2r}\xrightarrow{F}\mf M_{g,n-i,2r}$$ splits, for $i=1,2$.
\end{prop}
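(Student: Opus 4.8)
The plan is to run the argument of Proposition~\ref{NBS splits} almost verbatim, changing only the twisting divisor and inserting one cohomological observation about the forgetful map. Write $P_Y^\ast=p_1+\cdots+p_{n-i}$ for the retained NS punctures and set $D_Y^\ast=P_Y^\ast+R_Y$, so that on bosonic reductions the differential of $\Psi'$ factors as
\[
d\psi'\colon H^1(X,T_X(-D_X))\xrightarrow{\,d\psi\,}H^1(Y,T_Y(-D_Y))\xrightarrow{\,dF\,}H^1(Y,T_Y(-D_Y^\ast)),
\]
where $dF$ is induced by the sheaf inclusion $T_Y(-D_Y)\hookrightarrow T_Y(-D_Y^\ast)$ obtained by no longer requiring vanishing at $p_{n-i+1},\dots,p_n$.

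First I would treat a $G$-Galois cover $\pi\colon Y\to X$. Proposition~\ref{NBS splits} already identifies $d\psi$ with the inclusion $H^1(X,T_X(-D_X))\cong H^1(Y,T_Y(-D_Y))^G$ of the $G$-invariant part, so the only new input I need is that $dF$ stays surjective after passing to $G$-invariants. This I would read off from the $G$-equivariant skyscraper sequence
\[
0\to T_Y(-D_Y)\to T_Y(-D_Y^\ast)\to\mc S\to 0,\qquad \mc S=\bigoplus_{j=n-i+1}^{n}T_{p_j}Y.
\]
Since $\mc S$ is a skyscraper, $H^1(Y,\mc S)=0$, and because taking $G$-invariants is exact over $\C$ for the finite group $G$, the long exact sequence produces a surjection $H^1(Y,T_Y(-D_Y))^G\twoheadrightarrow H^1(Y,T_Y(-D_Y^\ast))^G$. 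Combined with the description of $\im d\psi$, this gives $\im d\psi'=H^1(Y,T_Y(-D_Y^\ast))^G$. Since $\Psi'$ is an immersion by Proposition~\ref{Psi' still immersion i=1,2}, the map $d\psi'$ is injective, hence an isomorphism onto the $G$-invariant subspace. The canonical Reynolds projection onto $G$-invariants followed by $(d\psi')^{-1}$ is then a retraction of $d\psi'$, whose kernel---the sum of the nontrivial isotypic components---is a canonical $G$-stable complement; this splits the sequence in the Galois case.

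For a general cover I would pass to the $G$-Galois closure $\hat Y\to X$, with $\hat Y\to Y$ of group $H<G$, exactly as at the end of Proposition~\ref{NBS splits}. Writing $D_{\hat Y}^\ast$ for the preimage of $D_Y^\ast$, the differential $d\hat\psi'$ factors as $\hat\pi^\ast\circ d\psi'$ through the $H$-invariant inclusion $\hat\pi^\ast$, so it is again injective; the same skyscraper argument, now with $H^1(\hat Y,\hat{\mc S})=0$, identifies its image with the full $G$-invariant subspace and splits the $\hat\psi'$ sequence. I would then assemble the commutative ladder comparing the $Y$- and $\hat Y$-normal-bundle sequences with vertical maps the inclusions $\hat\pi^\ast$, check as in Proposition~\ref{NBS splits} that the induced map $N'\hookrightarrow\hat N'$ is injective, and restrict the splitting of $\hat N'$ along it to obtain the desired splitting of $N'$.

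The genuinely new step is the surjectivity of $dF$ onto $G$-invariants, which is immediate once the skyscraper sequence is in place. I therefore expect the main friction to be bookkeeping in the non-Galois reduction: verifying that $D_Y^\ast$ pulls back to a $G$-equivariant divisor on $\hat Y$ whose complementary skyscraper still has vanishing $H^1$, and that the comparison ladder is compatible with $dF$ on both levels so that the diagram chase of Proposition~\ref{NBS splits} carries over unchanged.
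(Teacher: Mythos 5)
Your route is genuinely different from the paper's: the paper does not re-run the Galois argument with the smaller twisting divisor. It simply records that $F$ is a fibration, that $\Psi'=F\circ\Psi$ is an immersion (Proposition \ref{Psi' still immersion i=1,2}), that the normal bundle sequence of $\psi$ already splits (Proposition \ref{NBS splits}), and then invokes a general lemma from \cite{DW13}: if $i$ is an immersion, $f$ a fibration, and $f\circ i$ an immersion, then a splitting $s:N_{X,Y}\to i^*T_Y$ of the first normal bundle sequence descends, after composing with the quotient $i^*T_Y\to i^*T_Y/i^*T_{Y/Z}=(f\circ i)^*T_Z$, to a splitting of the normal bundle sequence of $f\circ i$. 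No equivariance of the forgotten points is ever needed.

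Your version has a genuine gap, and it occurs already in the Galois case rather than in the non-Galois bookkeeping you flagged. For a connected $G$-Galois cover $\pi:Y\to X$ branched over the single point $p$, the group $G$ acts transitively on the fibre $\pi^{-1}(p)$, so all ramification points over $p$ have the same local degree; when NS punctures are present at all, $\{p_1,\dots,p_n\}$ is therefore a single $G$-orbit. The truncated divisor $D_Y^\ast=p_1+\cdots+p_{n-i}+R_Y$ omits a proper nonempty subset of that orbit and is not $G$-invariant. Consequently $T_Y(-D_Y^\ast)$ carries no natural $G$-action, your skyscraper sequence is not $G$-equivariant, the space $H^1(Y,T_Y(-D_Y^\ast))^G$ is undefined, and the Reynolds projector you use to manufacture the retraction of $d\psi'$ does not exist; the same problem recurs on the Galois closure, where the preimage of $D_Y^\ast$ is $H$-stable but not $G$-stable. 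What survives of your argument is correct and useful: $dF$ is surjective because $H^1$ of the skyscraper vanishes, and $\ker dF\cap\im d\psi=0$ because $\Psi'$ is an immersion. But to conclude you must then choose a complement of $\im d\psi$ inside $H^1(Y,T_Y(-D_Y))$ that \emph{contains} $\ker dF$ and push it forward by $dF$ --- the canonical isotypic complement need not contain $\ker dF$ (a subspace meeting the invariants trivially need not lie in their complement), so its image under $dF$ need not be transverse to $\im d\psi'$. Performing that choice uniformly at the level of bundles over $\mc{SM}_{d,\rho}$ is precisely what the paper's immersion-plus-fibration lemma does, so the cleanest repair of your proof is to replace the Reynolds step by that lemma.
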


\begin{proof}
    Since $\Psi:\mf M_{d,\rho}\to \mf M_{g,n,2r}$ is an immersion, its bosonic reduction $\psi: \mc{SM}_{d,\rho}\to \mc {SM}_{g,n,2r}$ is still an immersion. Moreover, we know the normal bundle sequence of $\psi$ splits by Proposition \ref{NBS splits}. The map $F:\mf M_{g,n,2r}\to \mf M_{g,n-i,2r}$ is a fibration, hence so is its bosonic reduction $f:\mc{SM}_{g,n,2r}\to \mc {SM}_{g,n-i,2r}$. Moreover, by Proposition \ref{Psi' still immersion i=1,2} we know $f\circ\psi: \mc{SM}_{d,\rho}\to \mc{SM}_{g,n-i,2r}$ is still an immersion. Applying the following lemma will conclude the proof.
\end{proof}

    The lemma was established in the proof of Theorem 1.3 of \cite[p.48]{DW13}. The proof of this statement is not very hard, so we give it here. 

    \begin{lem}
        Suppose that $i:X\to Y$ is an immersion, $f:Y\to Z$ is a fibration, such that $f\circ i:X\to Z$ is still an immersion. If the normal bundle sequence of $i$ splits, then the normal bundle sequence of $f\circ i$ will also split.
    \end{lem}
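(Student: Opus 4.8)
The plan is to reduce the statement to a property of retractions and then transport the given splitting across the fibration. Throughout I work with the pullbacks of the relevant tangent bundles to $X$, so that every bundle lives over the single base $X$. Write $di\colon T_X\to i^*T_Y$ for the differential of $i$ and $p:=i^*(df)\colon i^*T_Y\to (f\circ i)^*T_Z$ for the pullback of the differential of $f$; since $f$ is a fibration, $df$ is surjective with kernel the vertical bundle $T_{Y/Z}$, so $p$ is surjective with kernel $K:=i^*T_{Y/Z}$. Setting $g:=f\circ i$, one has $dg=p\circ di$, and the three exact sequences in play are the normal sequence $0\to T_X\xrightarrow{di}i^*T_Y\to N_i\to 0$ of $i$, the normal sequence $0\to T_X\xrightarrow{dg}g^*T_Z\to N_g\to 0$ of $g$, and the pulled-back vertical sequence $0\to K\to i^*T_Y\xrightarrow{p}g^*T_Z\to 0$.

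First I would record the transversality forced by the hypotheses. Because $g=f\circ i$ is an immersion, $dg=p\circ di$ is injective, so $di(T_X)\cap K=0$ fiberwise; hence the composite $K\hookrightarrow i^*T_Y\twoheadrightarrow N_i$ is a fiberwise-injective bundle map and realizes $K$ as a subbundle of $N_i$. A short rank count then identifies the induced surjection $\bar p\colon N_i\to N_g$ as having kernel exactly the image of $K$, giving the clean sequence $0\to K\to N_i\xrightarrow{\bar p}N_g\to 0$ together with a $3\times 3$ diagram tying the three sequences together (the left column being the identity on $T_X$).

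The heart of the argument is the following observation about retractions. A splitting of the normal sequence of $i$ is the same as a retraction $r\colon i^*T_Y\to T_X$ with $r\circ di=\id_{T_X}$, and likewise a splitting of the normal sequence of $g$ is a retraction $\td r\colon g^*T_Z\to T_X$ with $\td r\circ dg=\id$. Since $p$ is surjective with kernel $K$, a retraction $r$ of $i$ factors as $r=\td r\circ p$ for some $\td r$ precisely when $r|_K=0$, and in that case $\td r\circ dg=\td r\circ p\circ di=r\circ di=\id$, so $\td r$ is the desired splitting. Thus everything comes down to exhibiting a retraction $r$ of the (split) normal sequence of $i$ that annihilates the vertical subbundle $K$. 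Equivalently, I would seek a complement $S$ to $di(T_X)$ inside $i^*T_Y$ that contains $K$: then $g^*T_Z=i^*T_Y/K=(di(T_X)\oplus S)/K=di(T_X)\oplus(S/K)\cong T_X\oplus N_g$, which is the sought splitting.

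The main obstacle is precisely this compatibility: producing a retraction that kills $K$, equivalently a complement to $di(T_X)$ containing $K$. Given any retraction $r_0$, correcting it amounts to extending $r_0|_K\colon K\to T_X$ along the subbundle inclusion $K\hookrightarrow N_i$, and the obstruction to doing so lies in the cokernel of $\Hom(N_i,T_X)\to\Hom(K,T_X)$. I expect this to be the only substantive point: once a retraction annihilating $K$ is in hand, the descent $r=\td r\circ p$ and the verification $\td r\circ dg=\id$ are immediate. In other words, the formal descent is trivial, and the entire content is in adapting the given splitting of $i$ to the vertical subbundle $K$ cut out by the fibration $f$.
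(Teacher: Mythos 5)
Your framework is correct and, up to replacing sections by retractions, it is the same route the paper takes: the paper pushes the given section $s\colon N_{X,Y}\to i^*T_Y$ forward through the quotient by $K=i^*T_{Y/Z}$, while you ask when the corresponding retraction $r\colon i^*T_Y\to T_X$ descends to $(f\circ i)^*T_Z=i^*T_Y/K$; these are equivalent, since $r$ annihilates $K$ exactly when the complement $s(N_{X,Y})$ contains $K$. The problem is that you stop at the decisive step. You correctly isolate the crux --- producing a retraction that kills $K$, equivalently a complement of $di(T_X)$ containing $K$ --- and correctly locate the obstruction in the cokernel of $\Hom(N_i,T_X)\to\Hom(K,T_X)$, but then you merely ``expect'' this to be resolvable. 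That is not a proof; as written the proposal establishes only the (easy) equivalence, not the lemma.

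Moreover, the purely bundle-theoretic statement you have reduced to is false in general, so the gap cannot be closed by formal manipulation of an arbitrary splitting. On an elliptic curve let $F$ be the nonsplit extension $0\to\mc O\to\mc F\to\mc O\to 0$; every map $\mc F\to\mc O$ vanishes on the sub-$\mc O$ (a nonzero restriction would split the extension), so $\id$ does not extend from the subbundle to $\mc F$. Taking $E=\mc O\oplus \mc F$, $A$ the first factor, and $K=\{(-t,\iota(t))\}\cong\mc O$ with $\iota\colon\mc O\hookrightarrow \mc F$, one has $A$ complemented in $E$ and $A\cap K=0$, yet no retraction $E\to A$ annihilates $K$, i.e.\ $A$ is not complemented in $E/K$. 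Hence any correct argument must use more than ``some splitting of the normal sequence of $i$ exists'': for instance the specific, canonical splitting of Proposition \ref{NBS splits} (projection onto the $G$-invariant part), together with a verification that it is compatible with the vertical directions of $F$. I will add that you have put your finger on exactly the point where the paper's own proof is silent: the assertion that $s'=\pi\circ s$ ``factors through'' the quotient is precisely the condition $s(q(K))\subseteq K$ you are trying to arrange, and it is not automatic. Identifying this is valuable, but the proposal does not yet prove the lemma.
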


\begin{proof}
    The differential $\dd f: T_Y\to f^*T_Z$ gives a bundle map over $Y$, with kernel $T_{Y/Z}$. Pulling back along $i$ we get a bundle map $i^*\dd f: i^*T_Y\to (f\circ i)^*T_Z$ over $X$, with kernel $i^*T_{Y/Z}$. This gives a commutative diagram with exact rows given by the normal bundle sequences:
    \[\begin{tikzcd}
	0 & {T_X} & {i^*T_Y} & {N_{X,Y}} & 0 \\
	0 & {T_X} & {(f\circ i)^*T_Z} & {N_{X,Z}} & 0
	\arrow[from=1-1, to=1-2]
	\arrow[from=1-2, to=1-3]
	\arrow[no head, from=1-2, to=2-2]
	\arrow[shift right, no head, from=1-2, to=2-2]
	\arrow[from=1-3, to=1-4]
	\arrow["{i^*\dd f}"', from=1-3, to=2-3]
	\arrow[from=1-4, to=1-5]
	\arrow[from=1-4, to=2-4]
	\arrow[from=2-1, to=2-2]
	\arrow[from=2-2, to=2-3]
	\arrow[from=2-3, to=2-4]
	\arrow[from=2-4, to=2-5]
    \end{tikzcd}\]
    A direct application of the snake lemma shows that $\ker(N_{X,Y}\to N_{X,Z})\cong i^*T_{Y/Z}$. Now, we are given a splitting $s:N_{X,Y}\to i^*T_Y$ of the top row. Composing it with the quotient map we obtain a new map
    \[
    s':N_{X,Y}\to i^*T_Y\to i^*T_Y/i^*T_{Y/Z}.
    \]
    Because $f$ is a submersion, $i^*\dd f:i^*T_Y\to (f\circ i)^*T_Z$ is surjective, hence passing to the quotient, the map $N_{X,Y}\to N_{X,Z}$ is still surjective with kernel $i^*T_{Y/Z}$ as discussed above. Therefore, we see that $N_{X,Z}=N_{X,Y}/i^*T_{Y/Z}$. Therefore, the map $s'$ above factors through $i^*T_{Y/Z}$ and we get a map
    \[
    s':N_{X,Z}\to i^*T_Y/i^*T_{Y/Z}.
    \]
    But pulling back the relative tangent sequence $0\to T_{Y/Z}\to T_Y\to f^*T_Z\to 0$ gives a short exact sequence
    \[
    0\to i^*T_{Y/Z}\to i^*T_Y\to (f\circ i)^*T_Z\to 0.
    \]
    Hence we conclude that $(f\circ i)^*T_Z=i^*T_Y/i^*T_{Y/Z}$, and the map $s'$ is actually a map
    \[
    s':N_{X,Z}\to (f\circ i)^*T_Z,
    \]
    which we claim to be the desired splitting. Indeed, $s'$ is obtained by taking the right inverse $s$ for the map $N_{X,Y}\to i^*T_Y$ and then passing to the quotient, hence is a splitting. This concludes the proof.
\end{proof}

\section*{Acknowledgment}
The author thanks Ron Donagi for suggesting this problem and for his guidance. The author would also like to thank Tony Pantev, Nadia Ott, Edward Witten, David Kazhdan, Yuanyuan Shen, and Fanzhi Lu for reading an earlier version of this paper and providing valuable feedbacks. Last but not least, thanks to Daebeom Choi, Xingyu Meng, and Victor Alekseev for helpful discussions.

\end{document}